\newtheorem{thrm}{Theorem}[section]
\newtheorem{lem}[thrm]{Lemma}
\newtheorem{cor}[thrm]{Corollary}
\theoremstyle{definition}
\numberwithin{equation}{section}
\def\R{{\mathbb R}}
\def\C{{\mathbb C}}
\def\e{\emptyset}
\def\di{\displaystyle}
\def\ep{\epsilon}
\def\saa{\Sigma_A^+}
\def\ff{{\mathcal F}}
\def\ep{\epsilon}
\def\spec{\mbox{\rm spec}\,}
\def\cc{{\mathcal C}}
\def\ms{\medskip}
\def\var{\mbox{\rm var}}
\def\be{\begin{equation}}
\def\ee{\end{equation}}
\def\tg{\tilde{g}}
\author{Luchezar Stoyanov}
\address{
School of Mathematics and Statistics,\\
University of Western Australia, Crawley WA 6009,  Australia}
\email{luchezar.stoyanov@uwa.edu.au}
\keywords{subshift of finite type, Ruelle transfer operator, Gibbs measure}
\subjclass{37A05, 37B10}
\begin{document}

\title[Ruelle transfer operators]{On Gibbs measures and spectra\\ of Ruelle transfer operators}

\begin{abstract}
We prove a comprehensive version of the Ruelle-Perron-Frobenius Theorem 
with explicit estimates of the spectral radius of the Ruelle transfer operator and various other
quantities related to spectral properties of this operator. The novelty here is that the H\"older
constant of the function generating the operator appears only polynomially, not exponentially as
in previous known estimates.
\end{abstract}
\maketitle

\section{Introduction} \label{sect1}
\renewcommand{\theequation}{\arabic{section}.\arabic{equation}}

We consider an one-sided shift space
$$\Sigma_A^+ = \{ \xi = (\xi_0, \xi_1, \ldots,\xi_m,\ldots) : 1\leq \xi_i \leq q, A(\xi_i,\xi_{i+1}) = 1
\; \mbox{\rm for all } \; i\geq 0\;\}\;,$$
where $A$ is a $q\times q$ matrix of $0$'s and $1$'s ($q\geq 2$). We assume that $A$ is 
{\it aperiodic}, i.e.  there exists an integer $M > 0$ such that  $A^M(i,j) > 0$ for all $i$, $j$ (see
e.g. Ch. 1 in \cite{PP}). The {\it shift map} $\sigma : \saa \longrightarrow \saa$ is defined by $\sigma(\xi) = \xi'$, 
where $\xi'_i = \xi_{i+1}$ for all $i \geq 0$. 

In this paper we consider {\it Ruelle transfer operators} $L_f : C(\saa) \longrightarrow C(\saa)$  defined by 
real-valued functions $f : \saa \longrightarrow \R$ by
$$L_fg(x) = \sum_{\sigma(y) = x} e^{f(y)}\, g(y)\;.$$ 
Here $C(\saa)$ denotes the space of all continuous functions $g : \saa \longrightarrow \R$ with the product topology.
Given $\theta \in (0,1)$, consider the metric $d_\theta$ on $\saa$ defined by
$d_\theta(\xi,\eta) = 0$ if $\xi = \eta$ and $d_\theta(\xi,\eta) = \theta^k$ if $\xi \neq \eta$ and $k\geq 0$
is the maximal integer with $\xi_i = \eta_i$ for $0\leq i \leq k$. For any function  $g: \saa \longrightarrow \R$ set
$$\var_k g = \sup\{ |g(\xi)- g(\eta)| : \xi_i = \eta_i ,\: 0\leq i \leq k\}\:\:\:, \:\:\:
|g|_\theta = \sup \left\{ \frac{\var_k g}{\theta^k} : k \geq 0\right\} \:,$$
$$|g|_\infty = \sup \{ |g(\xi)| : \xi\in\saa\}\:\:\:, \:\:\: \| g\|_\theta = |g|_\theta + |g|_\infty\;.$$ 
Denote by $\ff_\theta(\saa)$ the {\it space of all functions} $g$ on $\saa$ with $\|g\|_\theta < \infty$,
and by $\spec_\theta(L_g)$ the {\it spectrum} of $L_g : \ff_\theta(\saa) \longrightarrow \ff_\theta(\saa)$.

The Ruelle-Perron-Frobenius Theorem concerns spectral properties of the transfer operator
$L_f : \ff_\theta(\saa) \longrightarrow \ff_\theta(\saa)$. Assuming $A$ is aperiodic and $f \in \ff_\theta(\saa)$
is real-valued, it asserts that $L_f$ has a simple maximal positive eigenvalue $\lambda$, a
corresponding strictly positive eigenfunction $h$ and a probability measure $\nu$ on $\saa$
such that $\spec_\theta(L_f)\setminus \{\lambda\}$ is contained in a disk of radius 
$\rho\, \lambda$ for some $\rho\in (0,1)$,  $L_f^*\nu = \lambda \nu$, and assuming $h$ is normalized by
$\int h\, d\nu = 1$, we also have
\be
\lim_{n\to\infty} \frac{1}{\lambda^n}\, L^n_f g = h\, \int g\, d\nu 
\ee
for all $g \in \ff_\theta(\saa)$.  This was proved by Ruelle \cite{R1} (see also \cite{R2}). 
In the case of a complex-valued function $f$ similar results were established by Pollicott \cite{Po}. 

In this paper a comprehensive version of the Ruelle-Perron-FrobeniusTheorem is considered
which provides explicit estimates for the various constants and functions involved, e.g. the function
$h$ and the constant $\rho$ mentioned above, as well as the speed of convergence in (1.1).
Estimates of this kind were derived in \cite{St1}, however the constants that appeared there, including the estimate $\rho$
for the spectral radius of the operator $L_f$, involved terms of the form $e^{C |f|_\theta}$ for
various constants $C> 0$. The same applies to the estimates that appear in \cite{B}, \cite{R1}, \cite{R2}, \cite{PP}
and also to the estimate of the spectral radius of $L_f$ obtained in  \cite{N}. 

From our personal experience, when estimates for families of Ruelle transfer operators 
$L_f$ are considered for a class of functions $f$, usually the norms $|f|_\infty$ are uniformly bounded,
however the H\"older constants $|f|_\theta$ can vary a lot and in some cases can be arbitrarily large. 
That is why, estimates involving terms of the form   $e^{C |f|_\theta}$ are particularly unpleasant.  

All estimates obtained in this paper involve only powers of $|f|_\theta$, and,  in this sense, they are significantly 
sharper than the existing ones.

The motivation for \cite{St1} came from investigations in scattering theory  on distribution of scattering resonances,
in particular in dealing with the so called Modified Lax-Phillips Conjecture for obstacles $K$ in $\R^n$
that are finite disjoint unions of strictly convex bodies with smooth boundaries \cite{St2}. The present work 
stems from studies on decay of correlations for Axiom A flows and 
spectra of Ruelle transfer operators  in the spirit of \cite{D} and \cite{St3}.

Sect. 2 below contains the statement of the Ruelle-Perron-Frobenius Theorem 
with comprehensive estimates of the constants involved, while Sect. 3 is devoted to a proof of the theorem.  
As in \cite{St1}, we follow the main frame of the proof in \cite{B} with necessary modifications.

\section{The Ruelle-Perron-Frobenius Theorem}

\setcounter{equation}{0}

In what follows $A$ will be a $q\times q$ matrix ($q\geq 2$) such that $A^M > 0$ for some integer $M \geq 1$,
$\theta\in (0,1)$ will be a fixed number and $f \in  \ff_\theta(\saa)$ will be a fixed real-valued function. Set
\be
b = b_f = \max\{ 1, |f|_\theta\} .
\ee

\ms

\begin{thrm}\label{t1}
({\bf Ruelle-Perron-Frobenius Theorem}) 
   (a) {\it There exist a unique $\lambda = \lambda_f> 0$, a probability measure $\nu = \nu_f$ on $\saa$ and a
positive function $h = h_f \in \ff_\theta(\saa)$ such that $L_f h = \lambda\, h$ and $\di \int h\, d\nu = 1$.
The spectral radius of $L_f$ as an operator on $\ff_\theta(\saa)$ is $\lambda$, and 
its essential spectral radius  is $\theta\, \lambda$. The eigenfunction $h$ satisfies
\be
\frac{1}{K} \leq h \leq K \quad, \quad |h|_\theta \leq B b K ,
\ee
where 
\be
K = B b^{r_0} ,
\ee
and the constants $B$ and $r_0$ can be chosen as}
\be
B = \frac{e^{\frac{2\theta }{1-\theta}} q^{M+1} e^{2(M+1) |f|_\infty}}{1- \theta} \quad, \quad
r_0 = \frac{\log q + 2 |f|_\infty }{|\log \theta|} .
\ee

    (b) {\it  The probability measure $\hat{\nu} = h\,\nu$
(this is the so called {\it Gibbs measure} generated by $f$) is $\sigma$-invariant.}

\ms

   (c) {\it We have $\spec_\theta(L_f) \bigcap \{ z\in \C : |z| = \lambda\} = \{\lambda\}$. Moreover
$\lambda$ is a simple eigenvalue for $L_f$ and every $z\in \spec_\theta (L_f)$ with $|z| < \lambda$ satisfies 
$|z| \leq \rho \, \lambda$, where }
\be
\rho = 1- \frac{1-\theta}{8 K^3}  \in (0,1) .
\ee

\ms

(d) {\it  For every $g\in \ff_\theta(\saa)$ and every integer $n \geq 0$ we have
\be
\left\| L_f^n g - h \int g \, d \nu \right\|_\theta \leq  D_f \, \lambda^n\, \rho^n\, \|g\|_\theta ,
\ee
where }
$D_f =  \frac{100 K^5\, b^3}{1-\theta} .$
\end{thrm}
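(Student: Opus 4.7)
The plan is to follow the classical Bowen-style cone-invariance scheme, but to track every estimate so that $|f|_\theta$ appears only through polynomial factors of $b = \max\{1,|f|_\theta\}$, never as an exponential $e^{C|f|_\theta}$.

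For parts (a) and (b), I would work with a convex cone of positive H\"older functions
$$\Lambda = \bigl\{g \in \ff_\theta(\saa) : \tfrac{1}{K} \leq g \leq K,\ |g|_\theta \leq BbK\bigr\},$$
and apply Schauder--Tychonoff to a suitable normalization of $L_f$ on $\Lambda$. The two-sided bound $1/K \leq h \leq K$ is secured by iterating $L_f^M$: the aperiodicity $A^M > 0$ provides preimage chains of length $M$ from every initial symbol, each weighted by $e^{\pm M|f|_\infty}$, and this forces $\max h / \min h$ into a polynomial envelope $Bb^{r_0}$. The exponent $r_0 = (\log q + 2|f|_\infty)/|\log\theta|$ is precisely the scale past which $\theta^{r_0}\, q\, e^{2|f|_\infty}\leq 1$, so deeper cylinder variations can be summed geometrically without exponentiating. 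The bound $|h|_\theta \leq BbK$ then follows from the one-step estimate
$$\var_k L_f g \leq e^{|f|_\infty}\Bigl(\theta\var_{k-1} g + \tfrac{|f|_\theta\, \theta^{k+1}}{1-\theta}|g|_\infty\Bigr)$$
combined with $h = L_f h / \lambda$. The eigenmeasure $\nu$ is obtained by the same Schauder argument applied to $L_f^*$ on probability measures, and $\sigma$-invariance of $\hat\nu = h\nu$ follows from the identity $L_f((g\circ\sigma)h) = g\, L_f h = \lambda gh$ together with $L_f^* \nu = \lambda \nu$.

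For part (c), I would pass to the normalized Markov operator $\tilde L g = L_f(hg)/(\lambda h)$, which satisfies $\tilde L 1 = 1$ and preserves $\hat\nu$. On the subspace of $\hat\nu$-mean-zero functions I would prove a Doeblin--Fortet type contraction
$$\|\tilde L^n g\|_\theta \leq \rho^n \|g\|_\theta, \qquad \rho = 1 - \frac{1-\theta}{8K^3},$$
by comparing $\tilde L g(x)$ and $\tilde L g(y)$ at nearby $x, y$ and quantifying mass redistribution over the preimage tree; the factor $8K^3$ absorbs the worst-case weights of the normalized kernel, which are themselves polynomial in $K$. Simplicity of $\lambda$ is immediate from $h > 0$. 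For part (d), writing $\tilde g = g/h$ and using that $\tilde L^n$ fixes constants, we get
$$L_f^n g - \lambda^n h\textstyle\int g\, d\nu = \lambda^n h\,\tilde L^n\bigl(\tilde g - \int\tilde g\, d\hat\nu\bigr),$$
and the contraction of (c), combined with a product-rule estimate $\|g/h\|_\theta \leq \mathrm{poly}(K,b)\|g\|_\theta$ supplied by the bounds in (a), yields (1.6) with $D_f$ polynomial in $K$ and $b$.

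The hard part, and the source of the paper's novelty, is avoiding the multiplicative appearance of $|f|_\theta$ at every stage. The classical temptation is to bound $e^{S_M f(y)}/e^{S_M f(y')}$ by $\exp(\var_M S_M f) \leq \exp(|f|_\theta/(1-\theta))$, which would immediately reintroduce $e^{C|f|_\theta}$ into $K$ and hence into $\rho$ and $D_f$. Instead one must separate the contributions to each $\var_k$ into depth-$k$ and shallower parts, absorb $|f|_\theta$ \emph{additively} through the geometric factor $\theta^k$, and use the cutoff at depth $r_0$ to pass from shallow cylinders (where $|f|_\theta$ multiplies $|h|_\infty$ polynomially) to deeper ones (where it is dominated by the $q^M e^{2M|f|_\infty}$ scale). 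The individual steps are elementary; the work lies in the book-keeping.
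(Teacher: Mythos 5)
Your high-level strategy agrees with the paper's -- a Bowen-style cone argument with a depth cutoff in the cylinder scale, designed so that $|f|_\theta$ enters only polynomially -- but the mechanism you describe for the cutoff is wrong in a way that would collapse the whole estimate back to $e^{C|f|_\theta}$. You identify $r_0=(\log q+2|f|_\infty)/|\log\theta|$ as the cutoff depth, on the grounds that it is ``the scale past which $\theta^{r_0} q e^{2|f|_\infty}\leq 1$.'' But $r_0$ does not depend on $|f|_\theta$ at all, so a depth-$r_0$ cutoff cannot possibly tame the H\"older constant. The depth cutoff that does the work in the paper is $m_0$, defined by $\theta^{m_0}<1/b\leq\theta^{m_0-1}$, i.e. $m_0\approx\log b/|\log\theta|$. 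The cone $\Lambda$ imposes the multiplicative oscillation bound $g(y)\leq B_m g(x)$ only for cylinders of depth $m\geq m_0$, with $B_m=e^{2\sum_{j\geq m-m_0+1}\theta^j}$, and the invariance $T\Lambda\subset\Lambda$ hinges precisely on the estimate $|f(\xi)-f(\eta(\xi))|\leq b\,\theta^{m+1}\leq\theta^{m-m_0+1}$, which is available only because $b\,\theta^{m_0}<1$. In the paper, $r_0$ is not a cutoff depth but rather the resulting exponent of $b$ in $K=B\,b^{r_0}$: a Perron--Frobenius chain of length $m_0+M$ produces a factor $(q e^{2|f|_\infty})^{m_0}\approx e^{(\log b/|\log\theta|)(\log q+2|f|_\infty)}=b^{r_0}$. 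Conflating the two is not a cosmetic slip; with your $r_0$ cutoff the quantity $|f|_\theta\,\theta^{r_0+1}$ is still unbounded as $|f|_\theta\to\infty$, so the cone invariance check fails for large $b$.

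Two further points. First, the cone you propose, $\Lambda=\{g:\tfrac1K\leq g\leq K,\ |g|_\theta\leq BbK\}$, is not the paper's cone and is not obviously $T$-invariant: even $g\equiv 1$ produces $|T1|_\theta$ of order $K^2|f|_\theta/(1-\theta)$, which exceeds $BbK$ for large $b$. The paper's cone is defined by the normalization $\int g\,d\nu=1$ together with the cylinder-oscillation bounds $g(y)\leq B_m g(x)$ for $m\geq m_0$; the pointwise bounds $1/K\leq h\leq K$ and $|h|_\theta\leq BbK$ are then derived (Corollary 3.2, Lemma 3.5), not imposed. Second, for part (c) you pass to the normalized Markov operator $\tilde L g=L_f(hg)/(\lambda h)$; the paper does not do this, instead proving a Doeblin--Fortet-type contraction directly for $T=L_f/\lambda$ via the convex decomposition $T^{m_0+M}g=\mu h+(1-\mu)\tilde g$ with $\mu\approx(1-\theta)/K^2$ (Lemma 3.3), then bootstrapping to $\|\cdot\|_\theta$ via the Basic Inequalities and a split $n=k+\ell$ (Lemma 3.8). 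Your $\tilde L$ route is a legitimate alternative in principle, but you would still need the $m_0$-cutoff cone to bound $K$ polynomially, and you would need to control $\|g/h\|_\theta$ by $\mathrm{poly}(K,b)\|g\|_\theta$ using the bounds on $h$ and $1/h$ from part (a), which you state but do not verify.
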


\ms

The constants $K$, $ \rho$, $D_f$, etc. are not optimal, slightly better estimates are possible as one
can see from the proof in Sect 3.

\section{Proof of Theorem 2.1}
\setcounter{equation}{0}

We will use  the notation and assumptions from Sect. 2. Set $L = L_f$.
Given $x = (x_0,x_1, \ldots)\in \saa$ and $m \geq 0$, consider the {\it cylinder of length} $m$ 
$$\cc_m[x] = \{ y = (y_0,y_1, \ldots)\in \saa : y_j = x_j\:\:\mbox{\rm for all }\:\: j = 0,1,\ldots,m\} $$
determined by $x$. Set
$g_m(x) = g(x) + g(\sigma x) + \ldots + g(\sigma^{m-1} x) .$

As in \cite{B}, it follows from the Schauder-Tychonoff Theorem that there exist a Borel probability measure $\nu$
on $\saa$ and a number $\lambda > 0$ such that  
$L^*_f \nu = \lambda\, \nu$, that is $\di \lambda \int g \, d\nu = \int L_f g\, d\nu$ for every $g \in C(\saa)$.
With $g = 1$ this gives $\di \lambda= \int L_f 1 \, d\nu$. Clearly, $(L_f 1)(x)  = \sum_{\sigma\xi = x} e^{f(\xi)} \leq q e^{|f|_\infty}$,
and also $(L_f 1)(x) \geq e^{-|f|_\infty}$ for all $x \in \saa$. Thus,
\be
e^{-|f|_\infty} \leq \lambda \leq q\, e^{|f|_\infty} .
\ee

Let $m_0 = m_0(f,\theta) \geq 1$ be {\it the integer such that}
\be
\theta^{m_0} < \frac{1}{b} \leq \theta^{m_0-1} .
\ee
Then $m_0  \, \log \theta <  - \log b \leq (m_0-1)  \log\theta$, so
$$m_0  -1 \leq \frac{\log b}{|\log \theta|} <  m_0.$$

The first significant difference between our argument and the one in \cite{B} is in the definitions
of the constants $B_m$ and the space $\Lambda$ below. In our argument they depend on $m_0$, i.e. on $b$. 

For $m \geq m_0$ set $ \di B_m =  e^{2\sum_{j = m-m_0+1}^\infty \theta^j}$ and define
$$\Lambda = \left\{ g\in C(\saa) : g \geq 0,  \int g \,d\nu = 1 ,  g(y) \leq B_m\, g(x) \:
\mbox{\rm whenever } \: y\in \cc_m[x] ,  m \geq m_0 \right\}.$$
Then $\di B_{m_0} = e^{2 \sum_{j=1}^\infty \theta^j} = e^{\frac{2\theta}{1-\theta}}.$ 
Notice that in the above definitions {\bf we only consider integers $m$ with $m \geq m_0$.}
This will be significant later on.

\begin{lem} \label{l1}
{\it $\Lambda$ is a non-empty, convex and closed  in $C(\saa)$ equicontinuous family of functions and
the operator} $\di T = \frac{1}{\lambda} L$ maps $\Lambda$ into  $\Lambda$.
\end{lem}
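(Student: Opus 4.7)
The plan is to verify the five assertions in turn: non-emptiness, convexity, closedness, equicontinuity, and $T$-invariance; the first three are essentially immediate, the $T$-invariance is a short direct computation, and equicontinuity is the step that requires real work. The constant function $g\equiv 1$ lies in $\Lambda$ because $\int 1\,d\nu = 1$ and each $B_m\geq 1$, so the cylinder inequality $1\leq B_m\cdot 1$ holds trivially. Convexity is clear since $g\geq 0$, $\int g\,d\nu = 1$, and the linear inequality $g(y)\leq B_m g(x)$ are all preserved by convex combinations; closedness under uniform convergence in $C(\saa)$ is equally clear.

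For $T$-invariance, non-negativity of $Tg = \lambda^{-1} Lg$ is immediate, and $\int Tg\,d\nu = \lambda^{-1}\int Lg\,d\nu = \int g\,d\nu = 1$ follows from $L^*\nu = \lambda\nu$. For the cylinder inequality, fix $y\in \cc_m[x]$ with $m\geq m_0$. The $\sigma$-preimages of $x$ and $y$ are indexed by the same set of symbols $i$ with $A(i,x_0) = A(i,y_0) = 1$, so
$$
(Lg)(y) = \sum_i e^{f(iy)} g(iy), \qquad (Lg)(x) = \sum_i e^{f(ix)} g(ix),
$$
and for each such $i$ the concatenations $iy$ and $ix$ agree on their first $m+2$ coordinates, i.e.\ $iy\in \cc_{m+1}[ix]$. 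Therefore $g(iy)\leq B_{m+1}\, g(ix)$, while $|f(iy)-f(ix)|\leq |f|_\theta\,\theta^{m+1}\leq b\,\theta^{m+1}$; summing term by term yields $(Lg)(y)\leq e^{b\theta^{m+1}} B_{m+1}\,(Lg)(x)$. The defining condition (3.2) on $m_0$ gives $b\theta^{m_0}<1$, hence $b\theta^{m+1} < \theta^{m-m_0+1}\leq 2\theta^{m-m_0+1}$, which is precisely the statement $e^{b\theta^{m+1}} B_{m+1}\leq B_m$; dividing by $\lambda$ then gives the required bound for $Tg$.

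The main obstacle is equicontinuity, for which I first need a uniform sup-norm bound on $\Lambda$. Applying the cylinder inequality with the roles of $x$ and $y$ interchanged yields $g(x)\leq B_{m_0}\, g(y)$ for every $y\in \cc_{m_0}[x]$; integrating over $\cc_{m_0}[x]$ then gives $g(x)\leq B_{m_0}/\nu(\cc_{m_0}[x])$. A uniform positive lower bound on $\nu(\cc_{m_0}[x])$ is extracted from $L^{*n}\nu = \lambda^n\nu$ using aperiodicity: for any state $j$, $L^M\chi_{\{y_0=j\}}(y) = \sum_{w:\sigma^M w = y,\, w_0 = j} e^{f_M(w)}\geq e^{-M|f|_\infty}$ for every $y$, because $A^M>0$ guarantees at least one such $w$; integrating against $\nu$ and using (3.1) yields $\nu(\{y_0=j\})\geq e^{-2M|f|_\infty}/q^M$. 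A parallel computation with $L^{m_0}\chi_{\cc_{m_0}[x]}$ then propagates this to a lower bound on $\nu(\cc_{m_0}[x])$ depending only on $|f|_\infty$, $q$, $M$ and $m_0$. With a uniform bound $|g|_\infty\leq C$ in hand, equicontinuity drops out: for $y\in \cc_m[x]$ with $m\geq m_0$ both $g(y)\leq B_m g(x)$ and $g(x)\leq B_m g(y)$ hold, so $|g(x)-g(y)|\leq (B_m-1)|g|_\infty\to 0$ as $m\to\infty$, uniformly in $g\in\Lambda$. The bookkeeping here must be done carefully: aperiodicity is essential to rule out $\nu$ concentrating on a proper invariant set, and careless use of $L^*\nu = \lambda\nu$ alone would leak avoidable exponential dependence on $|f|_\infty$.
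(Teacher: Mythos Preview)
Your argument is correct and, for non-emptiness, convexity, closedness, $T$-invariance, and the deduction of equicontinuity from a uniform sup-bound, it is essentially the same as the paper's. The one substantive variation is in how you obtain the uniform bound $|g|_\infty\leq K$ on $\Lambda$: you integrate $g(x)\leq B_{m_0}g(y)$ over the cylinder $\cc_{m_0}[x]$ and then lower-bound $\nu(\cc_{m_0}[x])$ in two stages via $L^{*M}\nu=\lambda^M\nu$ and $L^{*m_0}\nu=\lambda^{m_0}\nu$ applied to indicator functions. The paper instead fixes $z$, uses aperiodicity to build a single point $y$ with $d_\theta(y,z)\leq\theta^{m_0}$ and $\sigma^{m_0+M}y=x$, bounds one term of $(L^{m_0+M}g)(x)$ from below by a multiple of $g(z)$, and integrates in $x$; this yields the same bound $K'=B_{m_0}\lambda^{m_0+M}e^{(m_0+M)|f|_\infty}$ as yours after using $\lambda\leq qe^{|f|_\infty}$. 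The paper's route has the side benefit that integrating the same inequality in $z$ rather than $x$ gives the lower bound $\min(T^{m_0+M}g)\geq 1/K$, which is recorded as (3.5) and used crucially in the next two lemmas; your approach does not produce this directly, so you would need a separate short argument for it later. (Your closing remark about ``avoidable exponential dependence on $|f|_\infty$'' is a bit off: exponential dependence on $|f|_\infty$ is unavoidable here and is present in both arguments; what the paper is controlling is the dependence on $|f|_\theta$, which enters only through $m_0$ and hence polynomially.)
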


\begin{proof}
We use a modification of the proof of Lemma 1.8 in \cite{B}.

It is clear that $\Lambda$ is convex and closed in $C(\saa)$, and also $\Lambda \neq \e$ since $1 \in \Lambda$.

Consider arbitrary $x = (x_0,x_1, \ldots), z = (z_0,z_1, \ldots) \in \saa$ and $g \in \Lambda$. Since $A^M > 0$, there exists
a sequence $(z'_{m_0+1}, z'_{m_0+2}, \ldots, z'_{m_0+M-1}, z'_{m_0+M} = x_0)$
such that 
$$y = (z_0,z_1, \ldots, z_{m_0}, z'_{m_0+1}, z'_{m_0+2}, \ldots, z'_{m_0+M-1}, z'_{m_0+M} = x_0,
x_1, x_2, \ldots) \in \saa .$$
Then $d_\theta(y,z) \leq \theta^{m_0}$, so $g \in \Lambda$ implies $g(z) \leq B_{m_0} g(y)$. Moreover,
$\sigma^{m_0+M} y  = x$, so
\begin{eqnarray}
(L^{m_0+M}g)(x) 
& =      & \sum_{\sigma^{m_0+M}(\xi) = x} e^{f_{m_0+M}(\xi)} g(\xi) \geq e^{f_{m_0+M}(y)} g(y)\\
& \geq & \frac{e^{- (m_0+M) |f|_\infty}}{B_{m_0}}  g(z) . \nonumber
\end{eqnarray}
Keeping $z$ fixed and integrating (3.3) with respect to $x$ gives
$$1 = \int g \, d\nu = \frac{1}{\lambda^{m_0+M}} \int L^{m_0+M} g\; d\nu 
\geq \frac{e^{- (m_0+M) |f|_\infty}}{\lambda^{m_0+M} B_{m_0}}  g(z) .$$
Setting
$K' = B_{m_0} \lambda^{m_0+M} e^{(m_0+M)|f|_\infty} ,$
the above implies $g(z) \leq K'$. This is true for all $z\in \saa$, so 
$|g|_\infty \leq K'$ for all $g \in \Lambda$. Using (3.1), (3.2) and the definition of $B_{m_0}$ we get
$$K' \leq e^{\frac{2\theta}{1-\theta}}(q e^{|f|_\infty})^{m_0+M} e^{(m_0+M)|f|_\infty}
\leq B e^{\frac{\log b}{|\log \theta|} (\log q + 2 |f|_\infty)} < B b^{r_0} = K ,$$
where $K$ is as in (2.3), while $B$ and $r_0$ are defined by (2.4).
(For later convenience we take slightly larger $B$ and $r_0$ than necessary here.)
Thus, 
\be
|g|_\infty \leq K \quad , \quad g \in \Lambda .
\ee

Next, integrating (3.3) with respect to $z$ yields
$$(T^{m_0+M}g)(x) = \frac{1}{\lambda^{m_0+M}} (L^{m_0+M}g)(x) \geq \frac{e^{- (m_0+M) |f|_\infty}}{\lambda^{m_0+M} B_{m_0}}  
= \frac{1}{K'} \geq \frac{1}{K} .$$
Thus, 
\be
\frac{1}{K} \leq \min (T^{m_0+M} g) \quad , \quad g \in \Lambda .
\ee

Let is prove now that $\Lambda$ is an equicontinuous family of functions. Given $\ep > 0$, take
$m \geq m_0$ so that $e^{2\theta^{m-m_0+1}/(1-\theta)} -1 < \ep/K$. Let $x,y\in \saa$ be such that $d_\theta(x,y) \leq \theta^m$. 
Then for any $g \in \Lambda$ we have $g(x) \leq B_m g(y)$, so $g(x) - g(y) \leq (B_m -1) g(y) \leq (B_m-1) K$.
Similarly, $g(y) - g(x) \leq (B_m-1) K$, so 
$$|g(x) - g(y) | \leq (B_m-1) K = \left( e^{2\theta^{m-m_0+1}/(1-\theta)} -1 \right) K < \ep .$$
Hence $\Lambda$ is equicontinuous.

It remains to show that $T(\Lambda) \subset \Lambda$. Let $g \in \Lambda$. Then $Tg \geq 0$
and $\int Tg\, d\nu = 1$. Let $m \geq m_0$ and let $y \in \cc_m[x]$. Given $\xi = (\xi_0, \xi_1, \ldots,) \in \saa$ with
$\sigma \xi = x$, we have $\xi_1 =x_0 = y_0$, $\xi_2 = x_1 = y_1$ , $\ldots, \xi_{m+1} = x_m = y_m$. Set
\be
\eta = \eta(\xi) = (\xi_0, \xi_1, \ldots, \xi_m, \xi_{m+1} = y_m, y_{m+1}, y_{m+2}, \ldots) \in \saa .
\ee
Then $\sigma \eta = y$ and $d(\xi, \eta(\xi)) \leq \theta^{m+1}$, so by (3.2), 
$$|f(\xi) - f(\eta(\xi))| \leq |f|_\theta d_\theta(\xi, \eta(\xi)) \leq b\, \theta^{m+1} \leq \theta^{m-m_0 +1} . $$
This and $g \in \Lambda$ imply $g(\xi) \leq B_{m+1} g(\eta(\xi))$ and
\begin{eqnarray*}
(Tg)(x)
& =    & \frac{1}{\lambda} \sum_{\sigma \xi = x} e^{f(\xi)} g(\xi) \leq 
\frac{1}{\lambda} \sum_{\sigma \xi = x} e^{f(\eta(\xi)) + \theta^{m-m_0 +1} } B_{m+1} g(\eta(\xi))\\
& =    & \frac{e^{\theta^{m-m_0 +1} } B_{m+1} }{\lambda} \sum_{\sigma \eta = y} e^{f(\eta)} g(\eta)
=  e^{\theta^{m-m_0 +1} } e^{2\sum_{j = m-m_0+2}^\infty \theta^j} \;(Tg)(y) \\
& \leq &  B_m (Tg)(y) .
\end{eqnarray*}
Thus, $Tg \in \Lambda$.
\end{proof}

Using the above Lemma and the Schauder-Tychonoff Theorem we derive

\begin{cor} \label{c1}
There exists $h \in \Lambda$ with $Th = h$, i.e. with $Lh = \lambda h$. Moreover we have
$\frac{1}{K} \leq h \leq K$,
where $K$ is given by {\rm (2.3)}.
\end{cor}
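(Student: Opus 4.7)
The plan is to obtain $h$ as a fixed point of $T$ via the Schauder-Tychonoff theorem, using $\Lambda$ as the arena, and then read off the two-sided bound from estimates already established in Lemma \ref{l1}.

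First, I would verify that $\Lambda$ is a compact convex subset of $C(\saa)$ in the uniform topology. Lemma \ref{l1} already gives that $\Lambda$ is non-empty, convex and closed, that every $g \in \Lambda$ satisfies $|g|_\infty \leq K$ by (3.4), and that $\Lambda$ is equicontinuous. Since $\saa$ is compact, the Arzelà-Ascoli theorem turns uniform boundedness and equicontinuity into relative compactness in $C(\saa)$; combined with closedness this yields that $\Lambda$ is compact. The operator $T = L/\lambda$ is bounded linear on $C(\saa)$, hence continuous, and by Lemma \ref{l1} it satisfies $T(\Lambda) \subset \Lambda$.

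Second, I would apply the Schauder-Tychonoff fixed point theorem to $T : \Lambda \to \Lambda$. This produces some $h \in \Lambda$ with $Th = h$, which by definition of $T$ is exactly $Lh = \lambda h$. Since $h \in \Lambda$, the upper bound $h \leq K$ is immediate from (3.4). For the lower bound, iterate: $h = Th = T^2 h = \cdots = T^{m_0+M} h$, and then (3.5) gives $h(x) = (T^{m_0+M}h)(x) \geq 1/K$ for every $x \in \saa$.

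There is no real obstacle here; the content of the corollary is entirely carried by Lemma \ref{l1}. The only points that require mild care are checking that the Schauder-Tychonoff hypotheses are satisfied (convexity, compactness, continuity, self-map) and observing that iterating $Th = h$ lets one apply the lower estimate (3.5), which was stated only for $T^{m_0+M}g$, directly to $h$ itself.
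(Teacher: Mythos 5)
Your proposal is correct and follows essentially the same route as the paper: Schauder--Tychonoff applied to $T$ on $\Lambda$ (with compactness via Arzel\`a--Ascoli from (3.4) and the equicontinuity established in Lemma \ref{l1}), followed by reading off $h \leq K$ from (3.4) and $h \geq 1/K$ from (3.5) after observing $T^{m_0+M}h = h$. The paper states this tersely in one line; you have simply made the hypotheses of the fixed-point theorem and the iteration step explicit.
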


The latter follows from (3.4) and (3.5), since $T^{m_0+M} h = h$.

\begin{lem} \label{l2}
{\it There exists a constant $\mu \in (0,1)$ such that for every $g \in \Lambda$ there exists
$\tg \in \Lambda$ with
\be 
T^{m_0+M}g = \mu h + (1-\mu) \tg .
\ee
More precisely we can take}
\be 
\mu = \frac{1-\theta}{4K^2 e^{2\theta/(1-\theta)}} < \frac{1}{4K^2} .
\ee
\end{lem}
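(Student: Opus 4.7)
The natural candidate is $\tg := (T^{m_0+M}g - \mu h)/(1-\mu)$; the task reduces to checking $\tg \in \Lambda$. The integral $\int \tg\, d\nu = 1$ is automatic from $L^*\nu = \lambda\nu$ (so $\int T^n g\, d\nu = \int g\, d\nu = 1$ whenever $g \in \Lambda$) and $\int h\, d\nu = 1$. Positivity $\tg \geq 0$ follows by combining (3.5), which gives $(T^{m_0+M}g)(x) \geq 1/K$, with $h \leq K$ from Corollary \ref{c1}: since $\mu < 1/(4K^2) \leq 1/K^2$, we have $\mu h \leq \mu K \leq 1/K \leq T^{m_0+M}g$.

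The substantive step is the cylinder condition $\tg(y) \leq B_m \tg(x)$ for $y \in \cc_m[x]$, $m \geq m_0$, equivalently
$$B_m (T^{m_0+M}g)(x) - (T^{m_0+M}g)(y) \geq \mu\bigl(B_m h(x) - h(y)\bigr).$$
The key idea is to iterate the computation at the end of Lemma \ref{l1}'s proof. By induction on $n \geq 1$, reusing that computation (the inductive step applies it to $T^{n-1}g$, which enjoys the sharper cylinder constants just produced rather than only the default $B_m$), one obtains: for $g\in\Lambda$ and $y\in\cc_m[x]$ with $m\geq m_0$,
$$(T^n g)(y) \leq B_m^{(n)}\,(T^n g)(x), \qquad B_m^{(n)} := B_m \exp\!\Bigl(-\theta^{m-m_0+1}\,\tfrac{1-\theta^n}{1-\theta}\Bigr).$$
Since $h\in\Lambda$ and $h = T^{m_0+M}h$ by Corollary \ref{c1}, the same bound applies with $g = h$; swapping $x$ and $y$ then yields $h(y) \geq h(x)/B_m^{(m_0+M)}$.

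Set $\delta := \theta^{m-m_0+1}(1-\theta^{m_0+M})/(1-\theta)$, so $B_m^{(m_0+M)} = B_m e^{-\delta}$. Combining (3.5) with $h \leq K$, the two sides of the cylinder inequality are controlled by
$$B_m (T^{m_0+M}g)(x) - (T^{m_0+M}g)(y) \geq \tfrac{B_m(1-e^{-\delta})}{K}, \qquad B_m h(x) - h(y) \leq K\bigl(B_m - e^\delta/B_m\bigr),$$
so it suffices to check $\mu K^2 \leq B_m(1-e^{-\delta})/(B_m - e^\delta/B_m)$ uniformly in $m \geq m_0$.

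The main obstacle is establishing this uniform lower bound, which I expect to be short but slightly delicate calculus. Writing $t := \log B_m = 2\theta^{m-m_0+1}/(1-\theta)$ and $q := (1+\theta^{m_0+M})/2 \in (1/2,1)$, the ratio simplifies to $(1 - e^{-t(1-q)})/(1 - e^{-t(1+q)})$; concavity of $u \mapsto 1 - e^{-u}$ (which makes $(1-e^{-u})/u$ decreasing in $u$) gives this is at least $(1-q)/(1+q) = (1-\theta^{m_0+M})/(3+\theta^{m_0+M}) \geq (1-\theta)/4 \geq (1-\theta)/(4B_{m_0})$, which is precisely the value of $\mu K^2$ prescribed by (3.8).
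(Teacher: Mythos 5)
Your argument is correct and reaches the same $\mu$, but it takes a more elaborate route than the paper. The candidate $\tg$, the positivity check via (3.5) and $\mu K < 1/K$, and the reduction to a cylinder inequality are all the same. The genuine difference is how you bound the two sides. The paper applies the single-step computation from Lemma \ref{l1} exactly once, to $G = T^{m_0+M-1}g \in \Lambda$, getting $(T^{m_0+M}g)(x) \le e^{\theta^{m-m_0+1}} B_{m+1}\,(T^{m_0+M}g)(y)$ (note $e^{\theta^{m-m_0+1}}B_{m+1} = B_m e^{-\theta^{m-m_0+1}}$), and uses only the default $h(x) \ge h(y)/B_m$ for $h$; it then closes via a mean-value estimate for $e^a - e^{-a}$ and $e^u-1\ge u$. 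You instead iterate that computation to the sharper constant $B_m^{(n)} = B_m\exp(-\theta^{m-m_0+1}\frac{1-\theta^n}{1-\theta})$ and apply it both to $T^{m_0+M}g$ and, via $h = T^{m_0+M}h$, to $h$; your induction (simultaneous over all levels $m \ge m_0$, the step at level $m$ invoking the hypothesis at level $m+1$) is set up correctly, and $e^{\theta^{m-m_0+1}}B_{m+1}^{(n-1)} = B_m^{(n)}$ does check out. You then finish with the clean observation that concavity of $1-e^{-u}$ makes $(1-e^{-u})/u$ decreasing, giving the ratio bound $(1-q)/(1+q) \ge (1-\theta)/4 \ge \mu K^2$. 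What your route buys is a tidier final inequality; what it costs is the extra bookkeeping of $B_m^{(n)}$, which is not actually needed, since the same concavity argument applied to the paper's single-step version (ratio $\frac{1-e^{-t(1-\theta)/2}}{1-e^{-2t}} \ge \frac{1-\theta}{4}$, with $t = \log B_m$) already yields the identical bound. In short: correct, a touch stronger intermediate estimates, equivalent conclusion.
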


\begin{proof}
We use a modification of the proof of Lemma 1.9 in \cite{B}.

Define $\mu$ by (3.8). Given $g \in \Lambda$, set 
$g_1 = T^{m_0+M}g - \mu h$ and $\tg = \frac{g_1}{1- \mu} .$
Then  (3.8) and $T^{m_0+M} g \in \Lambda$ imply
$\mu h \leq \mu K < \frac{1}{K} \leq \min (T^{m_0+M}g) ,$
so $g_1 > 0$. Moreover $\int g_1 \, d\nu = 1-\mu$, so $ \int \tg\, d\nu = 1$.

Next, let $m\geq m_0$ and let $x\in \saa$, $y \in \cc_m[x]$. We will show that $\tg(x) \leq B_m \tg (y)$,
which is equivalent to  $g_1(x) \leq B_m g_1 (y)$, i.e. to
$$(T^{m_0+M}g)(x) - \mu h(x) \leq B_m \left( (T^{m_0+M}g)(y) - \mu h(y)\right) ,$$
that is, to
\be
\mu(B_m h(y) - h(x)) \leq B_m (T^{m_0+M}g)(y) - (T^{m_0+M}g)(x) .
\ee

Given $\xi \in \saa$ with $\sigma \xi = x$ define
$\eta = \eta(\xi)$ by (3.6); then $\sigma\eta = y$ and $\eta \in \cc_{m+1}[\xi]$. For any $G \in \Lambda$,
as in the proof of Lemma \ref{l1}, we have
$$(LG)(x) = \sum_{\sigma\xi = x} e^{f(\xi)} G(\xi) \leq \sum_{\sigma \xi=x} e^{f(\eta) + \theta^{m-m_0+1}} B_{m+1}G(\eta)
\leq e^{\theta^{m-m_0+1}} B_{m+1} (LG)(y) .$$
Using this with $G = T^{m_0+M-1}g = \frac{1}{\lambda^{m_0+M-1}} L^{m_0+M-1}g \in \Lambda$ gives
\be
(T^{m_0+M}g)(x) \leq e^{\theta^{m-m_0+1}} B_{m+1} (T^{m_0+M} g)(y) .
\ee
This and $h(x) \geq \frac{h(y)}{B_m}$ show that to prove (3.9) it is enough to establish
$$\mu \left( B_m - \frac{1}{B_m}\right) h(y) \leq \left( B_m - e^{\theta^{m-m_0+1}} B_{m+1} \right) (T^{m_0+M} g)(y) .$$
Next, the definition of $B_m$, $h(y) \leq K$ and $(T^{m_0+M} g)(y) \geq 1/K$ show that the latter will be true if
we  prove
$$\mu \left( e^{\frac{2\theta^{m-m_0+1}}{1-\theta}} - e^{-\frac{2\theta^{m-m_0+1}}{1-\theta}} \right) 
 \leq \left( e^{2\theta^{m-m_0+1}} B_{m+1}  - e^{\theta^{m-m_0+1}} B_{m+1} \right) \frac{1}{K^2} ,$$
 which is equivalent to
 \be
 \mu \left( e^{\frac{2\theta^{m-m_0+1}}{1-\theta}} - e^{-\frac{2\theta^{m-m_0+1}}{1-\theta}} \right) 
 \leq e^{\theta^{m-m_0+1} + \frac{2\theta^{m-m_0+2}}{1-\theta} } \cdot \frac{e^{\theta^{m-m_0+1}}-1}{K^2} .
 \ee
For the left-hand-side of (3.11) there exists some $z$ with $|z| < 2\theta^{m-m_0+1}/(1-\theta) $ such that 
$$ \mu \left( e^{\frac{2\theta^{m-m_0+1}}{1-\theta}} - e^{-\frac{2\theta^{m-m_0+1}}{1-\theta}} \right) 
= \mu e^z \frac{4 e^{\theta^{m-m_0+1}}}{1-\theta} \leq \mu \frac{4 e^{\frac{2\theta}{1-\theta}}}{1-\theta} \theta^{m-m_0+1} .$$
For the right-hand-side of (3.11) we have
$$ e^{\theta^{m-m_0+1} + \frac{2\theta^{m-m_0+2}}{1-\theta} } \cdot \frac{e^{\theta^{m-m_0+1}}-1}{K^2} >
\frac{e^{\theta^{m-m_0+1}}-1}{K^2} \geq \frac{\theta^{m-m_0+1}}{K^2} .$$
Thus, (3.11) would follow from
$\mu \frac{4 e^{\frac{2\theta }{1-\theta}}}{1-\theta} \theta^{m-m_0+1} \leq \frac{\theta^{m-m_0+1}}{K^2} .$
The latter is clearly true by (3.8). This proves (3.11) which, as we observed, implies (3.9). Hence
$\tg(x) \leq B_m \tg (y)$ which shows that $\tg \in \Lambda$.
\end{proof}

\begin{lem} \label{l3}
{\it There exist constants $A > 0$ and  $\beta \in (0,1)$ such that 
\be
|T^ng - h|_\infty \leq A \beta^n 
\ee
for every $g \in \Lambda$ and every integer $n \geq 0$.  More precisely we can take}
\be
A =  4K^2 \quad , \quad  \beta =  1 - \frac{1-\theta}{4K^3}  \in  (\theta ,1) .
\ee
\end{lem}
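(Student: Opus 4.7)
The plan is to iterate Lemma \ref{l2}, treating $N := m_0 + M$ as the basic block length, and then pass from geometric decay in the block count $k$ to geometric decay in $n$.

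First I would prove by induction on $k \geq 0$ that for every $g \in \Lambda$ there exists $g_k \in \Lambda$ with
\[
T^{kN} g = \bigl(1 - (1-\mu)^k\bigr)\, h + (1-\mu)^k\, g_k .
\]
The base case $k=0$ is immediate. For the inductive step, use $Th = h$ (Corollary \ref{c1}) so that $T^N h = h$, and apply Lemma \ref{l2} to $g_k \in \Lambda$ to split $T^N g_k = \mu h + (1-\mu) g_{k+1}$ with $g_{k+1} \in \Lambda$; linearity of $T^N$ then gives the next step of the recursion.

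Next, for a general $n \geq 0$, write $n = kN + r$ with $0 \leq r < N$. Applying $T^r$ to the identity above and again using $T h = h$ yields
\[
T^n g - h = (1-\mu)^k \bigl( T^r g_k - h \bigr) .
\]
Since $\Lambda$ is $T$-invariant, $T^r g_k \in \Lambda$, so by (3.4) both $|T^r g_k|_\infty$ and $|h|_\infty$ are bounded by $K$, giving
\[
\bigl| T^n g - h \bigr|_\infty \leq 2K (1-\mu)^k .
\]

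The remaining task is to convert $(1-\mu)^k$ into a bound of the form $A\beta^n$ with the sharp constants claimed. Using $k \geq n/N - 1$ and the elementary inequality $(1-\mu)^{1/N} \leq 1 - \mu/N$ (valid since $(1 - \mu/N)^N \geq 1-\mu$ by Bernoulli), I obtain
\[
(1-\mu)^k \leq \frac{1}{1-\mu}\,\bigl(1 - \mu/N\bigr)^n .
\]
Since $\mu < 1/(4K^2)$ we have $1/(1-\mu) \leq 2$, which absorbs into the prefactor to give $A = 4K^2$ (with plenty of slack). The key technical point, which I expect to be the main obstacle, is verifying that
\[
\frac{\mu}{N} = \frac{\mu}{m_0 + M} \geq \frac{1-\theta}{4K^3},
\]
so that $1 - \mu/N \leq \beta = 1 - (1-\theta)/(4K^3)$. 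Substituting $\mu = (1-\theta)/(4K^2 e^{2\theta/(1-\theta)})$ this reduces to showing $K \geq (m_0 + M)\, e^{2\theta/(1-\theta)}$, which follows by unpacking $K = B b^{r_0}$ together with the defining estimates (2.4), the inequality $m_0 \leq 1 + \log b/|\log\theta|$, and the fact that $q^{M+1}/(1-\theta)$ already dominates $M+1$ while $b^{r_0}$ dominates $\log b/|\log\theta|$. This chain of elementary but somewhat bookkeeping-heavy comparisons is where the proof requires the most care. Finally, $\beta > \theta$ is immediate from $K \geq 1$, since $(1-\theta)/(4K^3) < 1-\theta$.
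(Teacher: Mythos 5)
Your proof is correct and in fact cleaner than the paper's, though it shares the same skeleton. Both arguments iterate Lemma~\ref{l2} to obtain $T^{kN}g = (1-(1-\mu)^k)h + (1-\mu)^k g_k$ with $g_k\in\Lambda$ and $N=m_0+M$. The difference is in how the residual $r$ with $n=kN+r$ is handled. The paper first passes to $|T^{kN}g-h|_\infty\leq 2K(1-\mu)^k$ and then applies $T^r$ to the \emph{difference} $T^{kN}g-h$, which is not in $\Lambda$, so it falls back on the crude operator bound $|TG|_\infty\leq qe^{2|f|_\infty}|G|_\infty$; this costs an extra factor $(qe^{2|f|_\infty})^{m_0+M}$ which then has to be absorbed into $K$. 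You instead apply $T^r$ to the decomposition identity itself, exploit $T^rh=h$ and $T(\Lambda)\subset\Lambda$ (so $T^rg_k\in\Lambda$), and get $T^ng-h=(1-\mu)^k(T^rg_k-h)$, hence $|T^ng-h|_\infty\leq 2K(1-\mu)^k$ with no extra factor. This simplification propagates: converting $(1-\mu)^k$ to $\beta^n$ via $k\geq n/N-1$ and $(1-\mu)^{1/N}\leq 1-\mu/N$ requires only $(m_0+M)e^{2\theta/(1-\theta)}\leq K$, whereas the paper's chain (which inserts the additional step $1-\mu/(m_0+M)<1-\mu/e^{m_0+M}$) needs the stronger $e^{m_0+M}e^{2\theta/(1-\theta)}<K$. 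Your weaker inequality does hold for all admissible parameters: write $b^{r_0}\geq 1+r_0\log b$ and check that $\tfrac{2^{M+1}}{1-\theta}\geq M+1$ and $\tfrac{2^{M+1}r_0}{1-\theta}\geq \tfrac{1}{|\log\theta|}$ (using $\log q\geq\log 2$), which together give $\tfrac{q^{M+1}e^{2(M+1)|f|_\infty}}{1-\theta}b^{r_0}\geq (M+1)+\tfrac{\log b}{|\log\theta|}\geq m_0+M$. You flag this constant-chasing as the delicate step and only sketch it, but the sketch is on target; filling it in as above would complete a fully rigorous and somewhat sharper proof than the one in the paper.
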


\begin{proof}
We use a modification of the proof of Lemma 1.10 in \cite{B}.

Let $g \in \Lambda$. Given an integer $n \geq 0$ write $n = p(m_0+M) + r$ 
for some integers $ p \geq 0$ and $r = 0,1, \ldots, m_0+M-1$. By Lemma \ref{l2},
$T^{m_0+M}g = \mu h + (1-\mu) g_1$ for some $g_1 \in \Lambda$. Similarly,
$T^{m_0+M}g_1 = \mu h + (1-\mu) g_2$ for some $g_2 \in \Lambda$, so
$$T^{2(m_0+M)}g = \mu h + (1-\mu) (\mu h + (1-\mu) g_2)
= \mu h (1 + (1-\mu)) + (1-\mu)^2 g_2 .$$
Continuing in this way we prove by induction
$$T^{p(m_0+M)}g  = \mu h (1 + (1-\mu) + \ldots + (1-\mu)^{p-1}) + (1-\mu)^p g_p$$
for some $g_p \in \Lambda$. Thus,
$$T^{p(m_0+M)}g  = \mu h \frac{1 - (1-\mu)^p}{1 - (1-\mu)} +  (1-\mu)^p g_p
=  h (1 - (1-\mu)^p) +  (1-\mu)^p g_p , $$
and therefore, using (3.4),
\be
|T^{p(m_0+M)}g - h|_\infty \leq (1-\mu)^p |h - g_p|_\infty \leq 2K (1-\mu)^p .
\ee

Next, notice that by (3.1) for every bounded function $G$ on $\saa$ we have
$$|(TG)(x)| = \frac{1}{\lambda} \left| \sum_{\sigma \xi = x} e^{f(\xi)} G(\xi) \right|
\leq  \frac{q e^{|f|_\infty}}{\lambda} |G|_\infty \leq  q e^{2|f|_\infty} |G|_\infty ,$$
so $|TG|_\infty \leq q e^{2|f|_\infty} |G|_\infty$. Using this $r$ times 
and setting
$\beta' = (1-\mu)^{\frac{1}{m_0+M}} ,$
 yields
\begin{eqnarray*}
|T^ng - h|_\infty
& =     & |T^r (T^{p(m_0+M)}g-h)|_\infty \leq (q e^{2|f|_\infty})^r |T^{p(m_0+M)}g - h|_\infty\\
& \leq & 2K (q e^{2|f|_\infty})^{m_0+M} \frac{(\beta')^n}{(\beta')^r}
\leq \frac{ 2K}{1-\mu} (q e^{2|f|_\infty})^{m_0+M} (\beta')^n .
\end{eqnarray*}

As in previous estimates, using (3.2) and (3.4) we get
\begin{eqnarray*}
(q e^{2|f|_\infty})^{m_0+M} 
& =    & q^M e^{2M|f|_\infty} e^{m_0 (\log q + 2|f|_\infty)}
\leq q^M e^{2M|f|_\infty} e^{(\frac{\log b}{|\log \theta|}+1) (\log q + 2|f|_\infty)}\\
& \leq & q^{M+1} e^{(2M+1)|f|_\infty} b^{r_0} < K .
\end{eqnarray*}
We have $1-\mu \geq 1/2$ by (3.8), so the above  and (3.13)  imply $|T^ng - h|_\infty \leq A (\beta')^n$. 

It remains to show that $\beta' \leq \beta$. We will use the elementary inequality $(1-x)^a \leq 1 - ax$
for $0 \leq x < 1$ and $0 < a < 1$. It implies
$\beta' = (1-\mu)^{\frac{1}{m_0+M}}  \leq 1- \frac{\mu}{m_0+M}  < 1- \frac{\mu}{e^{m_0+M}} 
= 1 - \frac{1-\theta}{4K^2 e^{m_0+M} e^{2\theta/(1-\theta)}} <  1 - \frac{1-\theta}{4K^3} = \beta .$
This proves the lemma.
\end{proof}

\begin{lem} \label{l4}
{\it For every $g \in \Lambda$ we have $|g|_\theta < B b K$, and so $\|g\|_\theta < 2 B b K$.}
\end{lem}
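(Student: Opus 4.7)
The plan is to split the estimate of $\var_k g$ into two regimes based on whether $k$ is large or small relative to $m_0$, exploit the cylinder-control property defining $\Lambda$ in the large regime, and use the crude bound $\var_k g\le 2|g|_\infty$ in the small regime. Throughout I will use the bound $|g|_\infty\le K$ established in (3.4), and the key consequence of (3.2) that $\theta^{1-m_0}\le b$.

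First, for $k\ge m_0$, suppose $\xi_i=\eta_i$ for $0\le i\le k$. Then $\eta\in\cc_k[\xi]$ (and symmetrically $\xi\in\cc_k[\eta]$), so by the definition of $\Lambda$ both $g(\xi)\le B_k g(\eta)$ and $g(\eta)\le B_k g(\xi)$. Combined with $|g|_\infty\le K$ this gives
\[
|g(\xi)-g(\eta)|\le (B_k-1)\min(g(\xi),g(\eta))\le (B_k-1)K.
\]
Since $B_k=\exp\bigl(2\theta^{k-m_0+1}/(1-\theta)\bigr)$ and $e^x-1\le xe^x$, one has
\[
B_k-1\le \frac{2\theta^{k-m_0+1}}{1-\theta}\,e^{2\theta/(1-\theta)}.
\]
Dividing by $\theta^k$ and invoking $\theta^{1-m_0}\le b$ yields
\[
\frac{\var_k g}{\theta^k}\le \frac{2K\,e^{2\theta/(1-\theta)}}{1-\theta}\,\theta^{1-m_0}\le \frac{2Kb\,e^{2\theta/(1-\theta)}}{1-\theta}.
\]

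Second, for $0\le k<m_0$, simply estimate $\var_k g\le 2|g|_\infty\le 2K$, so
\[
\frac{\var_k g}{\theta^k}\le 2K\,\theta^{-(m_0-1)}=2K\,\theta^{1-m_0}\le 2Kb,
\]
which is dominated by the bound from the first regime.

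Taking the supremum over $k\ge 0$ gives $|g|_\theta\le \dfrac{2Kb\,e^{2\theta/(1-\theta)}}{1-\theta}$. It remains to check that this is strictly less than $BbK$. From (2.4),
\[
B=\frac{e^{2\theta/(1-\theta)}q^{M+1}e^{2(M+1)|f|_\infty}}{1-\theta}\ge \frac{4\,e^{2\theta/(1-\theta)}}{1-\theta},
\]
since $q\ge 2$ and $|f|_\infty\ge 0$; thus $\frac{2\,e^{2\theta/(1-\theta)}}{1-\theta}<B$, giving $|g|_\theta<BbK$. Finally, $\|g\|_\theta=|g|_\theta+|g|_\infty\le BbK+K\le 2BbK$, since $B,b\ge 1$. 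There is no real obstacle here; the only subtlety is keeping track of the one-sided cylinder definition to ensure the $B_m$-inequality applies in both directions, and correctly exploiting $\theta^{1-m_0}\le b$ from (3.2) to convert the $m_0$-dependence into a factor of $b$.
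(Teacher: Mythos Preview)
Your proof is correct and follows essentially the same approach as the paper: both arguments split into the two regimes $k<m_0$ and $k\ge m_0$, use the crude bound $\var_k g\le 2K$ in the first regime together with $\theta^{1-m_0}\le b$, and in the second regime use the $\Lambda$-inequality $g(\xi)\le B_k g(\eta)$ together with $e^x-1\le x e^x$ to bound $B_k-1$. Your write-up makes the final comparison $\frac{2e^{2\theta/(1-\theta)}}{1-\theta}<B$ a bit more explicit than the paper does, but otherwise the proofs are the same.
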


\begin{proof}
Let $g \in \Lambda$ and let $x,y\in \saa$ be such that $d_\theta(x,y) = \theta^m$.
If $m \leq m_0-1$, then by (3.4),
$$|g(x)-g(y)| \leq 2K = 2K \frac{d_\theta(x,y)}{\theta^m} \leq \frac{2K}{\theta^{m_0-1}} d_\theta (x,y)
 \leq 2 b K d_\theta (x,y)  \leq B b K d_\theta (x,y) .$$
Next, assume that $m \geq m_0$. Then using again (3.2) and (3.4) we get
$$B_m - 1 = e^{\frac{2\theta^{m-m_0+1}}{1-\theta}} -1 \leq e^{2\theta/(1-\theta)} \frac{2\theta^{m-m_0+1}}{1-\theta}
= \frac{2e^{2\theta/(1-\theta)}}{(1-\theta)\theta^{m_0-1}} \theta^{m}
 < B\, b \, \theta^m  .$$
Since, $g(x) \leq B_m g(y)$ we have $g(x) - g(y) \leq (B_m -1) g(y) \leq (B b \theta^m) \, K = B b K d_\theta(x,y)$.
Similarly, $g(y) - g(x) < B b K d_\theta(x,y)$, so $|g(x) - g(y)| < B b K  d_\theta(x,y)$.
\end{proof}

In particular, $\Lambda \subset \ff_\theta(\saa)$, so $\lambda$ is an eigenvalue of the transfer operator 
$L_f : \ff_\theta(\saa) \longrightarrow \ff_\theta(\saa)$ and $h > 0$ is a corresponding
eigenfunction. Moreover, following arguments from the proof of Theorem 2.2 in \cite{PP}, one proves
that $\lambda$ is a simple eigenvalue and $\spec_\theta (L_f) \subset \{ z : |z| \leq \lambda\}$.
Also, following the argument from the proof of Theorem 1.5 in \cite{Ba}, one shows that
the essential spectral radius of $L_f$ as an operator on $\ff_\theta(\saa)$ is $\theta\, \lambda$.

\begin{lem} \label{l5}
{\it For every $g \in \ff_\theta(\saa)$ we have 
\be
\left| \frac{1}{\lambda^n}\, L^n g - h\, \int g\, d\nu\right|_\infty \leq A_1\, \beta^n \, 
\|g\|_\theta \quad , \quad  n \geq 0 ,
\ee
where} $A_1 = 2A b  = 8K^2 b.$
\end{lem}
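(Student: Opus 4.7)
The plan is to reduce the estimate for arbitrary $g \in \ff_\theta(\saa)$ to Lemma \ref{l3}, which controls $|T^n g - h|_\infty$ for $g \in \Lambda$. The key step is to express $g$ as a linear combination of two elements of $\Lambda$ (up to scaling). Since the constant function $1$ is trivially in $\Lambda$, it suffices to add a suitable constant $c$ to $g$ so that the normalized function $\psi := (g+c)/\int(g+c)\,d\nu$ lies in $\Lambda$.

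A natural choice I would make is $c := \|g\|_\theta$. Then $g + c \geq c - |g|_\infty = |g|_\theta \geq 0$, and $\int \psi\, d\nu = 1$ by construction. The non-trivial condition is the hereditary bound: for $y \in \cc_m[x]$ with $m \geq m_0$, one has
\[ \frac{(g+c)(y)}{(g+c)(x)} \leq 1 + \frac{|g|_\theta\, \theta^m}{(g+c)(x)} \leq 1 + \theta^m \leq B_m = e^{2\theta^{m-m_0+1}/(1-\theta)}, \]
where the last inequality follows from $e^x \geq 1+x$ together with $\theta^m \leq 2\theta^{m-m_0+1}/(1-\theta)$, which in turn holds because $\theta^{m_0-1}(1-\theta)/2 \leq 1$.

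With $\psi \in \Lambda$ established, I would set $c_1 := c + \int g\,d\nu$ and use the decomposition $g = c_1\psi - c\cdot 1$. Applying Lemma \ref{l3} to both $\psi \in \Lambda$ and $1 \in \Lambda$, together with $c_1 - c = \int g\,d\nu$, yields
\[ \lambda^{-n} L^n g - h\int g\,d\nu \;=\; c_1(T^n\psi - h) - c(T^n 1 - h), \]
hence $|\lambda^{-n}L^n g - h\int g\,d\nu|_\infty \leq (c_1 + c)A\beta^n$ by the triangle inequality. The estimate $c_1 + c \leq 2c + |g|_\infty \leq 3\|g\|_\theta$, combined with $b \geq 1$ and $A = 4K^2$, absorbs the numerical constant into $A_1 = 8K^2 b$.

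The main technical point is the verification that $\psi \in \Lambda$, which hinges on the compatibility between the form of $B_m$ (tailored exactly to absorb H\"older corrections of the form $|g|_\theta\theta^m$) and the definition of $m_0$ (encoding the dependence on $b$, so that the inequality $\theta^{m_0-1}(1-\theta)/2 \leq 1$ comes cheaply). Everything else reduces to linear bookkeeping with constants.
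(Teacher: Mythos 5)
Your overall strategy is the same one the paper uses: map $g$ affinely into the set $\Lambda$, apply Lemma~\ref{l3} to the pieces, and undo the affine change. The paper (following \cite{St1}) works first with $g\geq 0$, sets $\tg = Cg+1$ with $C = \frac{2}{(1-\theta)b|g|_\theta}$, shows $\tg/\omega\in\Lambda$, and then passes to general $g$ via the splitting $g = g_+-g_-$; you instead shift by $c=\|g\|_\theta$, which makes $g+c\ge |g|_\theta\ge 0$ immediately and lets you treat general $g$ in one stroke. Your verification that $\psi=(g+c)/\omega\in\Lambda$ is correct: from $g+c\ge|g|_\theta$ you get $(g+c)(y)/(g+c)(x)\le 1+\theta^m$, and $1+\theta^m\le B_m$ because $\theta^{m_0-1}(1-\theta)/2<1$, which is exactly the role that $m_0$ and the shape of $B_m$ are designed to play. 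The decomposition $g=c_1\psi-c\cdot 1$ and the resulting telescoping are also right. (Two small points worth noting: the case $|g|_\theta=0$ must be treated separately, since otherwise you divide by $(g+c)(x)\ge|g|_\theta=0$ — the paper does this — and you need $\omega>0$ for $\psi$ to make sense, which holds precisely because $g+c\ge|g|_\theta>0$.)

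The one genuine problem is the final constant. You arrive at $(c_1+c)A\beta^n$ with $c_1+c = 2\|g\|_\theta + \int g\,d\nu \le 3\|g\|_\theta$, hence a bound $3A\|g\|_\theta\beta^n = 12K^2\|g\|_\theta\beta^n$. This is \emph{not} absorbed into $A_1 = 8K^2 b$ for all admissible $f$: since $b=\max\{1,|f|_\theta\}$ can equal $1$, the inequality $12K^2\le 8K^2 b$ fails whenever $|f|_\theta<3/2$. So as written your argument proves the lemma only for $b\ge 3/2$. The paper's choice of $C$ (which builds a factor $b$ into the normalization) is precisely what makes the factor $b$ in $A_1$ appear on the right side of the estimate $\frac{\omega+1}{C}\le b\|g\|_\theta$; your simpler shift loses that factor and ends up with a clean $12K^2$ instead. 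To repair this you could either adapt the paper's $C$, or note that your bound $12K^2$ can be reduced to $2A=8K^2$ by first treating $g\ge 0$ (where $\int g\,d\nu + 2|g|_\theta\le\|g\|_\theta$ with the shift $c=|g|_\theta$) and then splitting $g=g_+-g_-$ as in the paper; but the proof as you have written it does not give $A_1=8K^2 b$.
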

\begin{proof}
We will proceed as in \cite{St1} with some modifications.

Let $g\in \ff_\theta(\saa)$. First, assume that $g \geq 0$. The case  $|g|_\theta = 0$ follows trivially from Lemma \ref{l3}, 
so assume $|g|_\theta > 0$ and set $\tg = C\, g + 1$, where $C = \frac{2}{(1-\theta) b |g|_\theta}$.
Then $\omega = \int \tg\, d\nu \geq 1$.

We will check that $\tg/\omega \in \Lambda$. Let $m \geq m_0$,
and let $x,y\in \saa$ be such that $y \in \cc_m[x]$. Assume e.g. $\tg(x) \geq \tg(y)$. We have
$$\tg(x) - \tg(y) = C(g(x) - g(y)) \leq C |g|_\theta d_\theta(x,y) = C|g|_\theta \theta^{m} .$$
Hence, using $\tg (y) \geq 1$ and (3.2) it follows that
\begin{eqnarray*}
\tg(x)
& \leq & \tg(y) + \frac{C|g|_\theta \theta^{m-m_0+1}}{\theta^{m_0-1}} \leq
\tg(y)\left( 1  + \frac{2\theta^{m-m_0+1}}{(1-\theta) b\, \theta^{m_0-1}}\right)\\
& \leq & \tg(y) \left( 1  + \frac{2\theta^{m-m_0+1}}{(1-\theta) }\right) \leq  \tg(y) e^{\frac{2\theta^{m-m_0+1}}{(1-\theta) }}
= \tg(y) \, B_m .
\end{eqnarray*}
This shows that $\tg/\omega \in \Lambda$, and by (3.12), $|T^n\tg - \omega h|_\infty \leq A \omega \beta^n$.
Thus,
$$\left|T^n(Cg + 1) - h \left(C \int g\, d\nu + 1\right) \right|_\infty \leq A \omega \beta^n .$$
Using this and (3.12) with $g = 1$ yields 
$$C \left|T^ng - h \int g\, d\nu \right|_\infty \leq \left|T^n 1 - h \int 1\, d\nu \right|_\infty 
+ A \omega \beta^n \leq A (\omega + 1) \beta^n ,$$
so
$\left|T^ng - h \int g\, d\nu \right|_\infty \leq  A \frac{\omega + 1}{C} \beta^n .$ Finally,
$$\frac{1}{C} (\omega+1) = \int g\, d\nu + \frac{2}{C} \leq |g|_\infty + (1-\theta) b |g|_\theta
\leq  b \|g\|_\theta .$$
Hence 
\be
\left|T^ng - h \int g\, d\nu \right|_\infty \leq  A b \|g\|_\theta \beta^n .
\ee

For general $g\in \ff_\theta(\saa)$, write $g = g_+ - g_-$, where  $g_+ = \max\{ g, 0\} \geq 0$ and
$g_- = g_+ - g \geq 0$. Then  $g_+, g_- \in \ff_\theta(\saa)$,  $\|g_+\|_\theta \leq \|g\|_\theta$
$\|g_-\|_\theta \leq \| g \|_\theta$, and $g_+\leq |g|_\infty$, $g_-\leq |g|_\infty$, so 
$\|g_+\|_\theta \leq \|g\|_\theta$ and $\|g_-\|_\theta \leq \|g\|_\theta$. Using (3.16) for $g_+$
and $g_-$ implies $\left|T^ng - h \int g\, d\nu \right|_\infty \leq  2A b \|g\|_\theta \beta^n$.
\end{proof}

We will now sketch the proofs of the Basic Inequalities (see Proposition 2.1 in \cite{PP} or Lemma 1.2 in \cite{B})
keeping track on the constants involved. We continue to use the notation from Sect. 2 and also the one
introduced above for the function $f$ and the operator $L = L_f$.

\begin{lem} \label{l6}
(Basic Inequalities) {\it We have
\be
|L^ng|_\infty \leq K^2 \lambda^n |g|_\infty \quad, \quad g \in C(\saa)\:,\: n \geq 0 ,
\ee
and
\be
|L^ng|_\theta \leq K^2 \lambda^n \left[\frac{2|f|_\theta}{1-\theta}  |g|_\infty  + \theta^n |g|_\theta \right]
\quad, \quad g \in \ff_\theta(\saa) \:,\: n \geq 0 .
\ee
Consequently,}
\be
\|L^ng\|_\theta \leq \frac{4 b K^2}{1-\theta}  \lambda^n  \|g\|_\theta 
\quad, \quad g \in \ff_\theta(\saa) \:,\: n \geq 0 .
\ee
\end{lem}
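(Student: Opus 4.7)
The plan is to prove the three bounds in order, with each supporting the next.

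For (3.17) I exploit the eigenfunction $h$ supplied by Corollary 3.2. Since $1 \leq Kh$ and $L^n h = \lambda^n h$, I get $L^n 1 \leq K L^n h = K\lambda^n h \leq K^2 \lambda^n$, hence $|L^n g|(x) \leq |g|_\infty L^n 1(x) \leq K^2\lambda^n |g|_\infty$.

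For (3.18), fix $x,y \in \saa$ with $d_\theta(x,y) = \theta^k$. To each $\xi$ with $\sigma^n\xi = x$ I associate the unique $\eta = \eta(\xi)$ with $\sigma^n\eta = y$ obtained by copying the first $n$ coordinates of $\xi$ and then appending the tail of $y$; so $d_\theta(\sigma^j\xi,\sigma^j\eta) \leq \theta^{n+k-j}$ for $0 \leq j < n$ and a telescoping estimate gives $|f_n(\xi)-f_n(\eta)| \leq |f|_\theta \theta^{k+1}/(1-\theta)$. Decomposing
\[
L^n g(x) - L^n g(y) = \sum_\xi e^{f_n(\xi)}\bigl(g(\xi)-g(\eta)\bigr) + \sum_\xi \bigl(e^{f_n(\xi)} - e^{f_n(\eta)}\bigr) g(\eta),
\]
the first sum is at most $\theta^{n+k}|g|_\theta L^n 1(x) \leq \theta^{n+k}K^2\lambda^n |g|_\theta$, which after division by $d_\theta(x,y) = \theta^k$ produces exactly the $\theta^n|g|_\theta$ contribution.

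The delicate step is the second sum. Using $|e^a-e^b| \leq |a-b| e^{\max(a,b)}$ together with $e^{\max(f_n(\xi),f_n(\eta))} \leq e^{f_n(\xi)} e^{|f_n(\xi)-f_n(\eta)|}$, the second sum is bounded in terms of $\frac{|f|_\theta\theta^{k+1}}{1-\theta}\,e^{|f_n(\xi)-f_n(\eta)|} L^n 1(x)|g|_\infty$. Here is where one must be careful to keep the dependence on $|f|_\theta$ polynomial rather than exponential, and this is done by splitting on $k$ relative to $m_0$: when $k \geq m_0-1$, the definition (3.2) gives $|f|_\theta\theta^{k+1} \leq \theta$, so the exponential factor is at most $e^{\theta/(1-\theta)}$, a constant depending only on $\theta$ that is absorbed into the $e^{2\theta/(1-\theta)}$ already built into $B$ (and hence $K$); when $k < m_0-1$ (which forces $|f|_\theta > 1$, and in particular $|f|_\theta > 1-\theta$), I discard the refined estimate and use the trivial bound $\mathrm{var}_k L^n g \leq 2|L^n g|_\infty \leq 2K^2\lambda^n |g|_\infty$, observing that $2/\theta^k \leq 2|f|_\theta/(1-\theta)$ in that regime. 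This case split is the one nontrivial step; everything else is bookkeeping.

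Finally, (3.19) is an immediate consequence of (3.17) and (3.18): adding them and using $1+2|f|_\theta \leq 1+2b \leq 3b$ together with $\theta^n \leq 1$ gives
\[
\|L^ng\|_\theta \leq K^2\lambda^n \left[\tfrac{1+2|f|_\theta}{1-\theta}|g|_\infty + |g|_\theta\right] \leq \tfrac{3bK^2}{1-\theta}\lambda^n\|g\|_\theta \leq \tfrac{4bK^2}{1-\theta}\lambda^n\|g\|_\theta,
\]
with the final loose constant $4$ providing slack for the constants absorbed in (3.18).
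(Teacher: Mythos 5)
Your proof of (3.17), and your derivation of (3.19) from (3.17)--(3.18), match the paper's argument. The problem is in the proof of (3.18), specifically in the estimate of the second sum. You bound $e^{\max(f_n(\xi),f_n(\eta))} \leq e^{f_n(\xi)}\, e^{|f_n(\xi)-f_n(\eta)|}$, which introduces the multiplicative factor $e^{|f_n(\xi)-f_n(\eta)|}$. After your case split, for $k \geq m_0-1$ you have $|f|_\theta\theta^{k+1} < 1$, so this factor is bounded by $e^{1/(1-\theta)}$ (not $e^{\theta/(1-\theta)}$, but that hardly matters). This constant is \emph{not} absorbable: (3.18) asserts the bound $K^2\lambda^n[\,\cdot\,]$ with $K^2$ exactly, and the factor $e^{2\theta/(1-\theta)}$ inside $B$ (hence $K$) is already fully consumed by the estimate $L^n1 \leq K^2\lambda^n$, which comes from $1/K \leq h \leq K$; there is no spare exponential there. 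Nor does the slack $4/3$ in passing from $3b$ to $4b$ in (3.19) help: $e^{1/(1-\theta)}$ blows up as $\theta\to 1$ while the available slack stays bounded (even if you track the tighter $K'$ in place of $K$ you only gain a polynomial factor $(1-\theta)^2$, which loses to $e^{1/(1-\theta)}$). So as written you prove a strictly weaker inequality than (3.18).

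The missing idea is a more economical bound on $|e^a-e^b|$. The paper uses $e^{\max(a,b)} \leq e^a + e^b$, which gives
$$|e^{f_n(\xi)}-e^{f_n(\eta)}| \leq |f_n(\xi)-f_n(\eta)|\,\bigl(e^{f_n(\xi)}+e^{f_n(\eta)}\bigr) \leq \frac{|f|_\theta}{1-\theta}\,d_\theta(x,y)\,\bigl(e^{f_n(\xi)}+e^{f_n(\eta)}\bigr).$$
Summing over $\xi$ then produces $(L^n1)(x)+(L^n1)(y) \leq 2K^2\lambda^n$, which is precisely where the factor $2$ in $\frac{2|f|_\theta}{1-\theta}$ comes from, with no extraneous exponential and no need for a case split on $k$ versus $m_0$. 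This is exactly the kind of ``polynomial in $|f|_\theta$, not exponential'' saving the paper is advertising, and it is worth noting that your version reintroduces (in a mild form) the very exponential dependence the lemma is designed to avoid.
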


\begin{proof}
We will just follow the standard arguments to derive the above estimates.

It follows from Corollary \ref{c1}  that
$$L^n 1 = K\, (L^n 1/K) \leq K\, L^n h = K \lambda^n h\leq K^2 \lambda^n ,$$
so $L^n 1 \leq K^2 \lambda^n$ for all $n \geq 0$.

Given $g\in C(\saa)$, for any $x\in \saa$ and any $n \geq 1$ we have
$$|(L^ng)(x)| \leq \sum_{\sigma^n\xi = x} e^{f_n(\xi)}  |g(\xi)| \leq |g|_\infty \, (L^n1)(x)
\leq K^2 \lambda^n |g|_\infty .$$
This proves (3.17).

Next, let $g \in \ff_\theta(\saa)$, and let $ n \geq 1$. Given $x\in \saa$ and $y \in \cc_n[x]$,
for any $\xi \in \saa$ with $\sigma^n\xi = x$, denote by $\eta = \eta(\xi)$ the unique element
of $\saa$ such that $\sigma^n \eta = y$ and $d_\theta(\xi, \eta) = \theta^{n} \, d_\theta(x,y)$. 
Then
$$|f_n(\xi) - f_n(\eta(\xi))| \leq \sum_{j=0}^{n-1} |f(\sigma^j \xi) - f(\sigma^j \eta)|
\leq \sum_{j=0}^{n-1} |f|_\theta \theta^{n-j} d_\theta(x,y) \leq \frac{|f|_\theta}{1-\theta} \, d_\theta (x,y) ,$$
and therefore
$$\left| e^{f_n(\xi)} - e^{f_n(\eta)}\right| \leq |f_n(\xi) - f_n(\eta)| \, e^{\max\{ f_n(\xi), f_n(\eta)\}}
\leq \frac{|f|_\theta}{1-\theta} \, d_\theta (x,y) \left[e^{f_n(\xi)} + e^{f_n(\eta)}\right] .$$
The above yields
\begin{eqnarray*}
&        & |(L^n g)(x) - (L^n g)(y)|
 \leq  \sum_{\sigma^n \xi = x} \left| e^{f_n(\xi)} g(\xi) - e^{f_n(\eta(\xi))} g(\eta(\xi)) \right| \\
& \leq & \sum_{\sigma^n \xi = x} \left[\left| e^{f_n(\xi)} - e^{f_n(\eta)} \right|\, |g(\xi)| + e^{f_n(\eta)} |g(\xi) -  g(\eta)| \right] \\
& \leq & \frac{|g|_\infty |f|_\theta}{1-\theta} \, d_\theta (x,y) \sum_{\sigma^n \xi = x} \left[e^{f_n(\xi)} + e^{f_n(\eta)}\right] + 
|g|_\theta \theta^n d_\theta (x,y) \sum_{\sigma^n \xi = x} e^{f_n(\eta)}\\
& \leq & \frac{|g|_\infty |f|_\theta}{1-\theta} \, d_\theta (x,y) [ (L^n 1)(x) + (L^n 1)(y)] + |g|_\theta \theta^n d_\theta (x,y)\, (L^n 1)(y)\\
& \leq & K^2 \lambda^n \left[  \frac{2  |f|_\theta}{1-\theta} |g|_\infty + \theta^n |g|_\theta \right] \, d_\theta (x,y) ,
\end{eqnarray*}
which proves (3.18). The latter obviously implies (3.19).
\end{proof}

To derive part (c) in   Theorem \ref{t1}, just notice that (2.5) implies $\rho > \beta$, where $\beta$ is given by (3.13). If
 $z\in \spec_\theta (L_f)$ with $\rho\, \lambda < |z|$ and $z \neq \lambda$, then
$z$ is an eigenvalue of $L$. If $g$ is a corresponding eigenfunction, then 
$\di \int g\, d\nu = 0$ by (3.15), and using (3.15) again gives $|z| \leq \beta\, \lambda < \rho\, \lambda$, a contradiction.
This shows that $\spec_\theta (L_f)\cap \{ z : |z| > \rho\, \lambda\} = \{ \lambda\}$.

We will now use (3.15) to prove

\begin{lem} \label{l7}
{\it For every $g \in \ff_\theta(\saa)$ we have 
\be
\left\| \frac{1}{\lambda^n}\, L_f^n g - h\, \int g\, d\nu\right \|_\theta \leq A_2\, \rho^n \, 
\|g\|_\theta \quad , \quad  n \geq 0 ,
\ee
where $\rho$ is given by {\rm (2.5)} and } $A_2 = \frac{100K^5 b^3}{(1-\theta)}  .$
\end{lem}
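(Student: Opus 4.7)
The plan is to promote the sup-norm estimate of Lemma \ref{l5} to a full $\|\cdot\|_\theta$-estimate by the standard intertwining-plus-Basic-Inequalities trick, losing a factor of roughly two in the exponent (which is exactly why $\rho = 1 - \frac{1-\theta}{8K^3}$ in (2.5) is obtained from $\beta = 1 - \frac{1-\theta}{4K^3}$ in (3.13)).

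Set $\phi_m = \frac{1}{\lambda^m}L^m g - h\int g\,d\nu$. Since $Lh = \lambda h$ and $\int g\,d\nu$ is a scalar, one has the intertwining identity $\phi_{m+k} = \frac{1}{\lambda^k}L^k \phi_m$ for all $m,k \geq 0$. Applying (3.18) from Lemma \ref{l6} to $L^k \phi_m$ and dividing by $\lambda^k$ yields
\begin{equation*}
|\phi_{m+k}|_\theta \;\leq\; K^2\!\left[\frac{2b}{1-\theta}\,|\phi_m|_\infty + \theta^k\,|\phi_m|_\theta\right].
\end{equation*}
Lemma \ref{l5} supplies $|\phi_m|_\infty \leq A_1\beta^m\|g\|_\theta$ with $A_1 = 8K^2 b$. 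For $|\phi_m|_\theta$ itself I would get only a uniform (non-decaying) bound: apply (3.18) to $L^m g$ to obtain $|L^mg/\lambda^m|_\theta \leq K^2\bigl[\tfrac{2b}{1-\theta} + 1\bigr]\|g\|_\theta$, then add the trivial $|h|_\theta\,|\!\int g\,d\nu|\leq BbK\,|g|_\infty$, using (2.2) and $B \leq K$ (immediate from (2.3) since $b \geq 1$). This gives $|\phi_m|_\theta \leq C_0\|g\|_\theta$ with $C_0$ of order $bK^2/(1-\theta)$, uniformly in $m$.

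Next I would split $n = m+k$ with $m = \lfloor n/2\rfloor$, $k = \lceil n/2\rceil$, so that both terms in the displayed inequality decay. The decay rate $\rho$ is forced by two elementary comparisons: first, $\sqrt{\beta}\leq \rho$, which follows from $1-\sqrt{\beta} = (1-\beta)/(1+\sqrt{\beta}) \geq (1-\beta)/2 = 1-\rho$ by (3.13) and (2.5); second, $\theta \leq \rho^2$, because $\rho^2 \geq 1 - 2(1-\rho) = 1 - \frac{1-\theta}{4K^3} > \theta$ since $K \geq 1$. Hence $\beta^m \leq \rho^{2m}\leq \rho^{n-1} \leq 2\rho^n$ (using $\rho \geq 1/2$) and $\theta^k \leq \rho^{2k}\leq \rho^n$. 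Substituting into the displayed inequality produces a bound of the form $|\phi_n|_\theta \leq C_1 \rho^n\|g\|_\theta$ with $C_1$ of order $b^2 K^4/(1-\theta)$.

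Finally I combine this with $|\phi_n|_\infty \leq A_1\beta^n\|g\|_\theta \leq 8K^2 b\,\rho^n\|g\|_\theta$ (trivially $\beta < \rho$) to get $\|\phi_n\|_\theta \leq A_2\rho^n\|g\|_\theta$. The small $n$ (say $n=0$) case is handled directly by $\|\phi_0\|_\theta \leq \|g\|_\theta + \|h\|_\theta|g|_\infty$ and fits easily into $A_2$. The only real obstacle is the constant bookkeeping: one must verify that adding up the prefactors one indeed stays below $\frac{100K^5 b^3}{1-\theta}$. Since every intermediate constant has been polynomial in $K$ and $b$ (never exponential in $|f|_\theta$, which is the whole point of the paper), this is purely arithmetic and leaves a comfortable margin.
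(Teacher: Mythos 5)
Your proposal is correct and follows essentially the same route as the paper: split $n$ roughly in half, apply the Basic Inequality (3.18) to the second half, use Lemma \ref{l5} for the sup-norm factor and a uniform bound for the $|\cdot|_\theta$ factor (which decays via $\theta^k$), and pass from $\beta$ to $\rho$ via $\sqrt{\beta}\le\rho$. Your intertwining identity $\phi_{m+k}=\frac{1}{\lambda^k}L^k\phi_m$ is a slightly cleaner way of packaging the paper's two-case structure (Case $\int g\,d\nu=0$, then reduction via $\tilde g=g-\alpha h$), but the underlying mechanism and the constants tracked are the same.
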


\begin{proof}
We will again use a corresponding argument in \cite{St1} with some modifications.
Let $g \in \ff_\theta(\saa)$ and let $n \geq 1$.

\ms

\noindent
{\bf Case 1.}  $\di \int g\, d\nu = 0$. Set $C = \frac{2|f|_\theta}{1-\theta}$, $\ell = [n/2]$ and $k = n-\ell$.
First notice that in the present case (3.15) gives
$|L^\ell g|_\infty \leq A_1 \lambda^\ell \beta^\ell \|g\|_\theta .$
Using this, (3.18), (3.15) and $\theta \leq \beta$ yields
\begin{eqnarray*}
|L^n g|_\theta
&  =   & |L^k(L^\ell g)|_\theta \leq K^2 \lambda^k(C  |L^\ell g|_\infty +  \theta^k\, |L^\ell g|_\theta)\\
& \leq & K^2 \lambda^k \, \left[ C A_1\, \lambda^\ell\, \beta^\ell \|g\|_\theta + \theta^k K^2 \lambda^\ell(C |g|_\infty +  \theta^\ell\,|g|_\theta) \right]
 \leq  A'\,\lambda^n  \beta^{n/2}\, \|g\|_\theta ,
\end{eqnarray*}
where $A' = \frac{40 K^4 b^2}{1-\theta}$. This proves (3.20) in the case considered. 

\def\tD{\tilde{D}}

\ms

\noindent
{\bf Case 2.} General case. Let $g \in  \ff_\theta(\saa)$ and let $n \geq 1$.
Set $\tg = g - \alpha \, h$, where $\alpha = \int g\, d\nu$. Then $\int \tg \, d\nu = 0$, so by Case 1,
we have
$|L^n \tg|_\theta \leq A'\,\lambda^n  \beta^{n/2}\, \|\tg\|_\theta .$
By Corollary \ref{c1} we have $|\tg|_\infty \leq |g|_\infty + K|g|_\infty \leq (1 + K) \|g\|_\theta$, while Lemma \ref{l4} implies
$|\tg|_\theta \leq |g|_\theta + |g|_\infty |h|_\theta \leq B b K \|g\|_\theta$. Thus, $\|\tg\|_\theta \leq 2B b K \|g\|_\theta$.
This and the above estimate imply
\begin{eqnarray*}
\left| \frac{1}{\lambda^n}\, L^n g - h\, \int g\, d\nu\right |_\theta
& =     & \frac{1}{\lambda^n} |L^n (g- \alpha h)|_\theta = \frac{1}{\lambda^n} |L^n\tg|_\theta\\
& \leq & A'\,\beta^{n/2}\, \|\tg\|_\theta \leq A'\,2 B b K  \beta^{n/2}\, \|g\|_\theta .
\end{eqnarray*}
Combining with (3.15) gives
$$\left\| \frac{1}{\lambda^n}\, L^n g - h\, \int g\, d\nu\right\|_\theta \leq \frac{100 B K^5 b^3}{1-\theta} \beta^{n/2}\, \|g\|_\theta . $$
Finally it follows from $\sqrt{1-x} < 1 - x/2$ for $0 < x < 1$ and (2.5) that
$\sqrt{\beta} = \sqrt{1- \frac{1-\theta}{4K^3}} \leq 1- \frac{1-\theta}{8K^3} = \rho .$
This proves (2.6).
\end{proof}

\footnotesize

\end{document}